\newcommand{\bp}{\begin{pmatrix}}
\newcommand{\ep}{\end{pmatrix}}
\newcommand{\be}{\begin{equation}}
\newcommand{\ee}{\end{equation}}
\newcommand{\ol}[1]{\overline{#1}}
\numberwithin{equation}{section}
\theoremstyle{plain}
\newtheorem{theorem}[equation]{Theorem}
\newtheorem{lemma}[equation]{Lemma}
\newtheorem{corollary}[equation]{Corollary}
\newtheorem{proposition}[equation]{Proposition}
\newtheorem*{claim*}{Claim}
\theoremstyle{definition}
\newtheorem{remark}[equation]{Remark}
\newtheorem{definition}[equation]{Definition}
\def\Z{\mathbb Z}
\def\Q{\mathbb Q}
\def\sm{\setminus}
\def\toiso{\xrightarrow{\cong}}
\def\zt{\Z[t,t^{-1}]}
\def\bp{\begin{pmatrix}}
\def\ep{\end{pmatrix}}
\def\ba{\begin{array}}
\def\ea{\end{array}}
\def\bn{\begin{enumerate}}
\def\en{\end{enumerate}}
\def\op{\operatorname}
\def\PD{\op{PD}}
\DeclareMathOperator\inc{inc}
\DeclareMathOperator\Hom{Hom}
\DeclareMathOperator\Id{Id}
\DeclareMathOperator\Bl{Bl}
\def\wti{\widetilde}
\begin{document}

\title{Twisted Blanchfield pairings and decompositions of 3-manifolds}

\author{Stefan Friedl}
\address{Fakult\"at f\"ur Mathematik\\ Universit\"at Regensburg\\   Germany}
\email{sfriedl@gmail.com}

\author{Constance Leidy}
\address{Department of Mathematics\\ Wesleyan University\\ Wesleyan Station, Middletown, CT 06459, USA}
\email{cleidy@wesleyan.edu}

\author{Matthias Nagel}
\address{
D\'epartement de Math\'ematiques, Universit\'e du Qu\'ebec \`a Montr\'eal, QC, Canada}
\email{nagel@cirget.ca}

\author{Mark Powell}
\address{
D\'epartement de Math\'ematiques, Universit\'e du Qu\'ebec \`a Montr\'eal, QC, Canada}
\email{mark@cirget.ca}


\def\subjclassname{\textup{2010} Mathematics Subject Classification}
\expandafter\let\csname subjclassname@1991\endcsname=\subjclassname
\expandafter\let\csname subjclassname@2000\endcsname=\subjclassname
\subjclass{%
 57M25, 
 57M27, 
 57N70, 
}
\keywords{twisted Blanchfield pairing, infection by a knot}

\begin{abstract}
We prove a decomposition formula for twisted Blanchfield pairings of 3-manifolds.
As an application we show that the twisted Blanchfield pairing of a 3-manifold obtained from a $3$-manifold $Y$ with a representation $\phi \colon \Z[\pi_1(Y)] \to R$, infected by a knot $J$ along a curve $\eta$ with $\phi(\eta) \neq 1$, splits orthogonally as the sum of the twisted Blanchfield pairing of $Y$ and the ordinary Blanchfield pairing of the knot~$J$, with the latter tensored up from~$\zt$ to~$R$.
\end{abstract}
\maketitle

\section{Introduction}
Given an oriented knot $P$ in $S^3$, together with an oriented unknot~$\eta$ in its
complement, and another oriented knot~$C$ in $S^3$, we can form
the $3$-manifold $S^3 \sm \nu \eta \cup S^3 \sm \nu C$, identifying
the two boundary tori by mapping the meridian of each knot to the longitude of the
other.
Take the image of the knot $P \subset S^3 \sm \nu \eta \cup S^3 \sm \nu C$
under the diffeomorphism of this manifold to $S^3$, to obtain the \emph{satellite knot} $P(J, \eta)$,
 with \emph{pattern}~$P$, \emph{companion}~$C$ and \emph{infection curve}~$\eta$.
Seifert~\cite{Seifert50} proved the elegant formula
\[ \Delta_{P(C,\eta)}(t) = \Delta_P(t) \cdot \Delta_C(t^\omega) \]
for the Alexander polynomial~$\Delta_{P(C,\eta)}(t)$, expressing it in terms
of the Alexander polynomials of the pattern and the companion and the winding number $\omega = \ell k(P,\eta)$.
This was extended to the Blanchfield form by Livingston and Melvin~\cite[Theorem 2]{LM86} as
\[ \Bl_{P(C, \eta)}(t) = \Bl_P(t) \oplus \Bl_C(t^\omega). \]
For twisted Alexander polynomials, a similar infection formula
was given by Kirk and Livingston~\cite[Theorem 3.7]{KL}.
In this article we obtain such a formula for twisted Blanchfield pairings.
Moreover, we generalise from satellites operation on knots to infections
of $3$-manifolds by knot complements, and in fact even further
to two $3$-manifolds glued together along a boundary torus.

Here are some definitions and conventions. In this paper a ring $R$ is
always equipped  with  (a possibly trivial) involution. For example, we view any
group ring $\Z[\pi]$ as a ring with involution in the canonical way.
Furthermore, all ring homomorphisms will be involution preserving, that is
morphisms of rings-with-involution. Given a left $R$-module $M$, we denote the
right $R$-module defined using the involution on $R$ by $\overline{M}$.

Now let $R$ be an Ore domain with (possibly trivial) involution. Let $Q$ be the Ore
localisation of $R$, i.e.\ the (skew) field of fractions of $R$, which inherits an involution from $R$. We refer to
\cite{Pa77}, \cite{St75} for details on Ore domains and the Ore localisation.  The Ore condition guarantees that every left fraction is also a right fraction, so that the field of fractions can be defined.

A \emph{linking pairing} on a torsion left $R$-module $M$ is a morphism $B_M \colon M \to M^{\wedge}:= \ol{\Hom_{R}(M,Q/R)}$ of left $R$-modules.
The map $B_M$ is the adjoint of a sesquilinear pairing $B_M \colon M \times M \to Q/R$, and henceforth we identify the two notions without comment.
A linking pairing $B_M$ is said to be \emph{nonsingular} if $B_M$ is an isomorphism, and \emph{hermitian} if $B_M = B_M^{\wedge}$.  A linking pairing that satisfies both of these properties is called a \emph{linking form}.
A \emph{morphism} of linking pairings $\psi \colon (M,B_M) \to (N,B_N)$ is an $R$-module homomorphism $\psi \colon M \to N$ for which
\[B_M = \psi^*B_N := \psi^{\wedge} \circ B_N \circ \psi.\]
 An \emph{isomorphism} of linking pairings is defined to be a morphism of linking pairings $\varphi \colon (M,B_M) \to (N,B_N)$ for which $\varphi \colon M \toiso N$ is an isomorphism.

Now let $Y$ be a  3-manifold with empty or toroidal boundary.
Here and throughout the paper we assume that all 3-manifolds are compact, oriented and connected. Let $\phi \colon
\Z[\pi_1(Y)] \to R$ be a morphism to the Ore domain $R$ such that $H_*(Y;Q)=0$.
Under this hypothesis the, \emph{twisted Blanchfield pairing} of $(Y,\phi)$,
\[\Bl_{Y,\phi}\colon H_1(Y;R)\times H_1(Y;R)\to Q/R,\]
is defined.
We will recall the definition in detail in Section~\ref{section:defn-of-TBF}. If $Y=S^3\setminus \nu J$ is the exterior of an oriented knot $J$ (here $\nu J$  denotes an open tubular neighbourhood around $J$) and the morphism~$\phi\colon \Z[\pi_1(S^3\setminus \nu J)]\to \Z[t,t^{-1}]$ is induced by the abelianisation map, then
\[ H_1(S^3\sm \nu J;\zt)\times H_1(S^3\setminus \nu J;\zt)\to \Q(t)/\Z[t,t^{-1}]\]
is precisely the classical Blanchfield pairing $\Bl_J$ on the Alexander module of the knot; see \cite{Bl57}. The more general twisted Blanchfield pairings (sometimes referred to as higher-order Blanchfield pairings) first appeared in the seminal work of Cochran-Orr-Teichner \cite[Theorem~2.13]{COT03}.
The theory of twisted Blanchfield pairings was further developed by Leidy~\cite{Le06} and played a major r\^ole in the work of Cochran-Harvey-Leidy \cite{CHL08,CHL09,CHL11} and in \cite{Fr13, Bu14, Ch14, Ja17}.

In general it is difficult to give a useful description of twisted Blanchfield pairings over a non-commutative ring $R$.
The next theorem gives a decomposition formula for Blanchfield pairings, and thus allows the computation of Blanchfield pairings to be broken up into hopefully easier pieces.

\begin{theorem}[Orthogonal decomposition theorem]\label{thm:decomp-of-3-mflds-Bl-intro}
Let $Y$ be a 3-manifold with empty or toroidal boundary and let $Y=A\cup_T B$ be a decomposition of $Y$ along a torus $T$ into two 3-manifolds $A$ and $B$.
Let $R$ be an Ore domain with involution and let $\phi\colon \Z[\pi_1(Y)]\to R$ be a
 morphism such that $H_*(T;Q)=0$ and such that $H_*(Y;Q)=0$.
Then $H_*(A;Q)=0$, $H_*(B;Q)=0$, and the inclusion maps
$i_A \colon A\to Y$ and $i_B \colon B\to Y$ induce a morphism of linking pairings
\[ i_A + i_B \colon (H_1(A;R)\oplus H_1(B;R), \Bl_{A, \phi|_A} \oplus \Bl_{B,\phi|_B}) \to (H_1(Y;R), \Bl_{Y,\phi}).\]
\end{theorem}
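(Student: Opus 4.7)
The plan is twofold. I would first establish $H_*(A;Q)=0=H_*(B;Q)$ via the Mayer--Vietoris long exact sequence of the decomposition $Y=A\cup_T B$ with coefficients in $Q$: since $H_*(T;Q)=0$ and $H_*(Y;Q)=0$ by hypothesis, each term $H_n(A;Q)\oplus H_n(B;Q)$ is squeezed between two zero groups and therefore vanishes. This ensures that both $\Bl_{A,\phi|_A}$ and $\Bl_{B,\phi|_B}$ are defined.

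To prove the morphism-of-pairings statement, I would unpack it into the two ``diagonal'' identities $\Bl_{Y,\phi}(i_A(x), i_A(x')) = \Bl_{A,\phi|_A}(x,x')$ and $\Bl_{Y,\phi}(i_B(y), i_B(y')) = \Bl_{B,\phi|_B}(y,y')$, together with the ``off-diagonal'' orthogonality $\Bl_{Y,\phi}(i_A(x), i_B(y')) = 0$ (the transposed off-diagonal identity then follows by hermitianness). My main tool would be the standard chain-level description of the twisted Blanchfield pairing (to be set up in Section~\ref{section:defn-of-TBF}): represent $[x]\in H_1(Y;R)$ by a $1$-cycle $x$ in the $\phi$-cover of $Y$, use $H_*(Y;Q)=0$ to produce $s\in R\setminus\{0\}$ and a $Q$-chain $\Gamma$ with $\partial\Gamma = sx$, and compute
\[ \Bl_{Y,\phi}([x],[y]) = s^{-1}\langle\Gamma, y\rangle \bmod R\]
via the equivariant intersection pairing $\langle\cdot,\cdot\rangle$; analogous formulas compute $\Bl_{A,\phi|_A}$ and $\Bl_{B,\phi|_B}$.

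Given this formula, the diagonal identities follow by representing $x\in H_1(A;R)$ by a cycle in $A$ and, via $H_*(A;Q)=0$, choosing the bounding chain $\Gamma$ inside the preimage of $A$ in the $\phi$-cover of $Y$; the pair $(\Gamma, x')$ then simultaneously computes $\Bl_{A,\phi|_A}(x,x')$ and $\Bl_{Y,\phi}(i_A(x), i_A(x'))$ by naturality of the equivariant intersection pairing under inclusion. For the orthogonality, I would push the representative of $y\in H_1(B;R)$ into $\Int(B)$, make $\Gamma\subset A$ transverse to $T$, and observe that $\Gamma\cap y=\emptyset$ in $Y$ because $\Gamma\subset A$ while $y\subset\Int(B)$. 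The hardest part will be the equivariant bookkeeping: the preimage of $A$ in the $\phi$-cover of $Y$ is typically disconnected, and identifying its components with the covering space of $A$ that computes $\Bl_{A,\phi|_A}$ requires care. The hypothesis $H_*(T;Q)=0$ is used precisely to guarantee that a bounding $Q$-chain in $Y$ for a cycle supported in $A$ can be refined to a $Q$-chain supported in (the preimage of) $A$, without obstructions coming from the interface torus $T$; a chain-level Mayer--Vietoris argument should make this refinement rigorous and simultaneously produce the compatibility of intersection numbers used above.
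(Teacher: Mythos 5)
Your first step (Mayer--Vietoris over $Q$ to get $H_*(A;Q)=0=H_*(B;Q)$) is exactly the paper's argument and is fine. The rest of your plan, however, rests on a formula that you assert but do not establish: the chain-level intersection description $\Bl_{Y,\phi}([x],[y]) = s^{-1}\langle\Gamma,y\rangle \bmod R$ with $\partial\Gamma=sx$. The paper defines $\Bl_{Y,\phi}$ as the composite $\kappa\circ\beta^{-1}\circ\PD$ followed by restriction along $H_1(N;R)\to H_1(N,\partial N;R)$, and it never proves (nor uses) an equivariant-intersection reformulation. Over a noncommutative Ore domain such a reformulation is genuinely delicate: one must sort out the left/right module structures and the sesquilinearity conventions (which side $s$ and $\ol{s}$ appear on), show that the set of admissible $s$ for a given class behaves well enough for $s^{-1}\langle\Gamma,y\rangle$ to be independent of the choices of $s$ and $\Gamma$, and make equivariant transversality rigorous in the $\phi$-cover. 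Proving that this formula agrees with the definition via Poincar\'e duality, the Bockstein and the Kronecker evaluation is comparable in difficulty to the theorem itself, so as written your argument has a real gap at its foundation. The paper avoids this entirely by working with the homological definition throughout: it assembles one large commutative diagram out of the long exact sequences of the pairs $(Y,A)$ and $(Y,B)$, the naturality of the Bockstein and Kronecker maps, the compatibility of cap products with inclusions (Bredon VI.8.6), and the single chain-level computation $f\cap[Y]=e^*(f)\cap[A]$ for a cochain $f$ vanishing on $B$, using $\Delta([Y])=\Delta([A])+\Delta([B])$. That last computation is the honest substitute for your geometric ``$\Gamma$ can be taken inside $A$'' step.

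Two further points. First, you dispose of one off-diagonal identity by appealing to hermitianness, but the paper explicitly does not establish that the twisted Blanchfield pairing is hermitian (it says so at the end of Section~\ref{section:defn-of-TBF}); a morphism of linking pairings requires \emph{both} $\Bl_Y(i_Ax,i_By')=0$ and $\Bl_Y(i_By,i_Ax')=0$, and the paper proves both by running its diagram argument symmetrically in $A$ and $B$. Your geometric argument could also be run symmetrically, so this is fixable, but as stated it invokes a property you do not have. Second, your closing remark about $H_*(T;Q)=0$ being needed to ``refine'' a bounding chain in $Y$ to one in $A$ is off the mark: once $H_*(A;Q)=0$ is known, the bounding chain can be chosen in $A$ from the outset, and the torus hypothesis has already done its work in the Mayer--Vietoris step.
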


This theorem can be used for many different purposes. For example it can be used to prove a formula relating the Blanchfield form of a connected sum of knots $K\# J$ to the Blanchfield forms of the knots $K$ and $J$.
Arguably the most important application of Theorem~\ref{thm:orthogonal-decomp-of-Bl-infected-intro} is to infection of a 3-manifold by a knot, as in the aforementioned papers by Cochran-Harvey-Leidy, Burke, Cha, Franklin and Jang.

\begin{definition}\label{defn:infection}
Let $Y$ be a $3$-manifold and let $\eta \subset Y$ be an oriented embedded circle.   Denote the exterior by $Y(\eta) := Y \sm \nu \eta$.
Furthermore, let $J \subset S^3$ be an oriented knot with exterior~$E_J:=S^3 \sm \nu J$.
An \emph{infection} of $Y$ by $J$ is the $3$-manifold
\[Y_J := Y(\eta) \cup E_J\]
where the meridian of $\eta$ is glued to the zero-framed longitude of $J$, and some longitude of $\eta$ is glued to the meridian of $J$.
\end{definition}

There is some indeterminacy in the choice of the longitude of $\eta$, and changing the isotopy class of $\eta$ can change $Y_J$. However we will see that the twisted Blanchfield pairing only depends on the homotopy class of $\eta$.
There exists a degree one map $E_J \to E_U=S^1\times D^2$, which restricted to the boundary is a diffeomorphism
that preserves the meridian and longitude. This map extended by the identity defines a degree one map $f \colon Y_J \to Y$.

In order to state the next result we need to introduce more notation.
Let $\pi$ be a group and let $\eta\in \pi$. Given an Ore domain $R$ and a
morphism $\phi\colon \Z[\pi]\to R$, we say that $\phi$ is \emph{$\eta$-regular} if the induced map $\Z[\langle \eta\rangle]\to R$ is a monomorphism.
If $J\subset S^3$ is an oriented knot and $\phi\colon \Z[t,t^{-1}]=\Z[\langle t\rangle]\to R$ is a $t$-regular homomorphism, then we can consider the tensor product $R \otimes_{\Z[t,t^{-1}]} \Bl_{J}$. More precisely, we have the pairing
\[ \begin{array}{rcl} R\otimes_{\Z[t^{\pm 1}]} H_1(E_J;\Z[t^{\pm 1}])\times R\otimes_{\Z[t^{\pm 1}]} H_1(E_J;\Z[t^{\pm 1}])&\to & Q/R\\
((r\otimes h),(r'\otimes h'))&\mapsto & r\phi(\Bl_J(h,h'))\ol{r}'.\end{array}\]
In the case of an infection, $E_J \subset Y_J$, and the restriction of $\phi \circ f_* \colon \Z[Y_J] \to R$ to $\Z[\pi_1(E_J)] \to R$ factors through $\Z[\langle \eta \rangle] = \zt$.  If $\phi$ is $\eta$-regular, we can identify $$H_1(E_J;R) \cong R \otimes_{\zt} H_1(E_J;\zt).$$  In Lemma~\ref{lemma:tensoring-up} we also prove that under this identification $\Bl_{E_J} = R \otimes_{\Z[t,t^{-1}]} \Bl_{J}$.
The following theorem is the second main result of this paper.

\begin{theorem}[Infection by a knot for twisted Blanchfield pairings]\label{thm:orthogonal-decomp-of-Bl-infected-intro}
Let $Y$ be a $3$-manifold with empty or toroidal boundary, let $\eta \subset Y$ be a simple closed curve and let $J\subset S^3$ be an oriented knot.
Furthermore, let $\phi\colon \Z[\pi_1(Y)]\to R$ be an
$\eta$-regular homomorphism to an Ore domain such that
$H_*(Y;Q)=0$.
Then there is an isomorphism $\psi \colon H_1(Y;R) \oplus H_1(E_J;R) \toiso H_1(Y_J;R)$
(defined in Corollary~\ref{cor:theIsometryMap}), that induces an isomorphism
\[ \Bl_{Y,\phi} \oplus (R \otimes_{\zt} \Bl_{J})  \toiso  \Bl_{Y_J,\phi \circ f_*}\]
of linking pairings.
\end{theorem}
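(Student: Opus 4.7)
The plan is to apply the Orthogonal Decomposition Theorem (Theorem~\ref{thm:decomp-of-3-mflds-Bl-intro}) twice -- to the torus decompositions $Y = Y(\eta) \cup_T \nu\eta$ and $Y_J = Y(\eta) \cup_T E_J$ -- and to reconcile the resulting morphisms of linking pairings. First I would verify the hypotheses. Since $\mu_\eta$ bounds a meridional disc in $\nu\eta$, we have $\phi(\mu_\eta) = 1$, and the cellular chain complex of $T$ (using $\eta$-regularity) makes $H_*(T;R)$ entirely $R$-torsion, with $H_1(T;R)$ a cyclic $R$-module generated by $[\mu_\eta]$. Similarly $H_*(\nu\eta;R)$ is $R$-torsion (as $\nu\eta \simeq S^1$), and $H_*(E_J;R) \cong R \otimes_{\zt} H_*(E_J;\zt)$ is torsion via the injection $\zt \hookrightarrow R$, $t \mapsto \phi(\eta)$. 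Mayer--Vietoris applied to $Y$, and then to $Y_J$, yields $H_*(Y(\eta);Q) = 0$ and $H_*(Y_J;Q) = 0$, so Theorem~\ref{thm:decomp-of-3-mflds-Bl-intro} applies and produces morphisms of linking pairings
\[ \Phi_Y \colon \Bl_{Y(\eta)} \oplus \Bl_{\nu\eta} \to \Bl_{Y,\phi}, \qquad \Phi_{Y_J} \colon \Bl_{Y(\eta)} \oplus \Bl_{E_J} \to \Bl_{Y_J, \phi \circ f_*}, \]
induced by the pairs of inclusions. Since $H_1(\nu\eta;R) = 0$, $\Phi_Y$ reduces to a morphism $\Bl_{Y(\eta)} \to \Bl_{Y,\phi}$.

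Next I would construct $\psi$. The key geometric input is that under the infection gluing $\mu_\eta$ is identified with the zero-framed longitude $\lambda_J$, which bounds a Seifert surface for $J$; lifting this Seifert surface to the infinite cyclic cover $\widetilde{E_J}$ shows $[\lambda_J] = 0$ in $H_1(E_J;\zt)$, hence $[\mu_\eta] = 0$ in $H_1(E_J;R)$. Consequently, both Mayer--Vietoris maps $H_1(T;R) \to H_1(\nu\eta;R)$ and $H_1(T;R) \to H_1(E_J;R)$ vanish. Furthermore the connecting maps $H_1(Y;R),\, H_1(Y_J;R) \to H_0(T;R)$ both vanish, because all three zeroth homologies involved ($T, \nu\eta, E_J$) equal $R/(\phi(\eta)-1)$ and the inclusion-induced maps among them are identities. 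The two Mayer--Vietoris sequences therefore collapse to a canonical isomorphism
\[ H_1(Y_J;R) \; \toiso \; H_1(Y(\eta);R)/\im(H_1(T;R)) \; \oplus \; H_1(E_J;R) \; \cong \; H_1(Y;R) \oplus H_1(E_J;R), \]
and I define $\psi$ to be its inverse; explicitly, $\psi(\alpha, \beta) = j_*(\tilde\alpha) + i_*(\beta)$ for any lift $\tilde\alpha \in H_1(Y(\eta);R)$ of $\alpha$, and this prescription is well-defined because the ambiguity $\im(H_1(T;R) \to H_1(Y(\eta);R))$ dies in $H_1(Y_J;R)$.

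The isometry property is then a formal consequence of the two morphisms $\Phi_Y$ and $\Phi_{Y_J}$. For lifts $\tilde\alpha_i \in H_1(Y(\eta);R)$ of $\alpha_i \in H_1(Y;R)$ and $\beta_i \in H_1(E_J;R)$, $\Phi_{Y_J}$ gives
\[ \Bl_{Y_J, \phi\circ f_*}(\psi(\alpha_1, \beta_1), \psi(\alpha_2, \beta_2)) = \Bl_{Y(\eta)}(\tilde\alpha_1, \tilde\alpha_2) + \Bl_{E_J}(\beta_1, \beta_2); \]
$\Phi_Y$ then identifies $\Bl_{Y(\eta)}(\tilde\alpha_1, \tilde\alpha_2) = \Bl_{Y,\phi}(\alpha_1, \alpha_2)$, and Lemma~\ref{lemma:tensoring-up} identifies $\Bl_{E_J} = R \otimes_{\zt} \Bl_J$. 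The main obstacle I anticipate is the Mayer--Vietoris analysis in the middle paragraph -- specifically the Seifert surface lifting argument to kill $[\lambda_J]$ in the twisted Alexander module, together with the simultaneous tracking of both sequences (the vanishing of the connecting maps to $H_0(T;R)$, and the identification of the MV image inside $H_1(Y(\eta);R) \oplus H_1(E_J;R)$) to ensure $\psi$ is a genuine isomorphism rather than merely a well-defined surjection.
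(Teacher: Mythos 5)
Your proposal is correct and follows essentially the same route as the paper: two applications of the orthogonal decomposition theorem (to $Y=Y(\eta)\cup_T E_U$ and $Y_J=Y(\eta)\cup_T E_J$), the Mayer--Vietoris identification of $H_1(Y_J;R)$ using the vanishing of $[\lambda_J]$ in the Alexander module via the Seifert surface, the definition of $\psi$ on the quotient by $\im H_1(T;R)$, and the lift-and-compare argument for the isometry (which is the element-wise version of the paper's ``$g$ surjective, hence $g^{\wedge}$ injective'' step), finishing with Lemma~\ref{lemma:tensoring-up}. No substantive differences or gaps.
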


\begin{remark}
The statement of Theorem~\ref{thm:orthogonal-decomp-of-Bl-infected-intro} is a  generalisation of \cite[Theorem~4.6]{Le06}. The proof of~\cite[Theorem~4.6]{Le06} is problematic, since in the second diagram on page~765, the square involving Poincar\'{e} duality does not commute in general.
\end{remark}

The paper is organised as follows. In Section~\ref{section:defn-of-TBF} we give
the definition of twisted Blanchfield pairings.
Section~\ref{section:proof-of-orthog-decomp-thm} gives the proof of the
orthogonal decomposition Theorem~\ref{thm:decomp-of-3-mflds-Bl-intro}, our main
technical result. Then in Section~\ref{section:Bl-pairing-infection}, we apply
Theorem~\ref{thm:decomp-of-3-mflds-Bl-intro} to prove
Theorem~\ref{thm:orthogonal-decomp-of-Bl-infected-intro}.

\subsection*{Acknowledgements}
We thank the referee for helpful feedback.
SF, MN and MP  gratefully acknowledge the support provided by the SFB 1085 `Higher
Invariants' at the University of Regensburg, funded by the Deutsche
For\-schungsgemeinschaft (DFG).
This paper was written while MP was a visitor at the Max Planck Institute for Mathematics in Bonn.

\section{Twisted Blanchfield pairings}\label{section:defn-of-TBF}
Let $X$ be connected $CW$-complex and let $Y\subset X$ be a possibly empty subcomplex. Furthermore, let $R$ be a ring and let $M$ be a $(R,\Z[\pi_1(X)])$-bimodule. We can define the cellular chain complex
\[ M\otimes_{\Z[\pi]} C_*(\wti{X},\wti{Y};\Z),\]
where $\wti{X}$ is the universal cover of $X$,
and its dual chain complex
\[ \Hom_{\text{right-}\Z[\pi]}\Big(\ol{C_*(\wti{X},\wti{Y};\Z)},M\Big).\]
Here $\wti{Y}$ is the pullback covering space of $\wti{X} \to X$ under the inclusion $Y \subset X$.
Both chain complexes are naturally chain complexes of left $R$-modules.
We denote the corresponding homology groups as $H_*(X,Y;M)$ and $H^*(X,Y;M)$, which are again left $R$-modules.

 Let $R$ be an Ore domain with involution and let $\phi \colon \Z[\pi_1(X)] \to R$ be a morphism. This allows us to view $R$, $Q$ and $Q/R$ as $\Z[\pi_1(X)]$-right modules.
Using the fact that $\phi$ is a morphism of rings with involution, it is straightforward to verify that
\[ \ba{rcl}
\Hom_{\text{right-}\Z[\pi]}\Big(\ol{C_*(\wti{X},\wti{Y};\Z)},Q/R\Big)&\to&\ol{ {\op{Hom}_{\text{left-}R}(R\otimes_{\Z[\pi]} C_*(\wti{X},\wti{Y};\Z),{Q/R})}}\\
f&\mapsto & \left( r\otimes \sigma\mapsto r \cdot \ol{ f(\sigma)}\right)
\ea \]
is a well-defined isomorphism of chain complexes of left $R$-modules. For the left $R$-action on the domain, we use the involution on $Q/R$ to convert it to a right $R$-module.  The isomorphism of chain complexes above induces a homomorphism
\[\kappa \colon H^i(X,Y;Q/R)\to \ol{\op{Hom}_{\text{left-}R}(H_i(X,Y;R),Q/R)}\]
of left $R$-modules.

Now let $N$ be a 3-manifold with empty or toroidal boundary.
Throughout the remainder of this section we assume that $H_*(N;Q)=0$.
We consider the following  sequence of homomorphisms
\[\xymatrix @C-0.2cm@R0.8cm{H_1(N;R) \ar[d]_{\Bl_{N,\phi}}\ar[r]^-{\PD}& H^2(N,\partial N;R)\ar[r]^-{\beta^{-1}}&  H^1(N,\partial N;Q/R) \ar[d]^-{\kappa}\\
\ol{\Hom_R(H_1(N;R),Q/R)} &&\ar[ll]^{i^*} \ol{\Hom_R(H_1(N,\partial N;R),Q/R)}.}\]
Here:
\begin{enumerate}
\item $\PD\colon H_1(N;R)\to H^2(N,\partial N;R)$ denotes the Poincar\'{e}-Lefschetz duality map \cite[Section 5.2.2]{DK};
\item $\beta \colon H^1(N,\partial N;Q/R)\to  H^2(N,\partial N;R)$ denotes the Bockstein homomorphism, which is an isomorphism since our assumption that $H_*(N;Q)=0$ implies by Poincar\'e duality  that $H^*(N,\partial N;Q)=0$;
\item $\kappa\colon H^1(N,\partial N;Q/R) \to \Hom_R(H_1(N,\partial N;R),Q/R)$ denotes the Kronecker evaluation map defined above;
\item $i\colon H_1(N;R)\to H_1(N,\partial N;R)$ denotes the map from the long exact sequence of the pair.
\end{enumerate}
We refer to the pairing induced by the composition of these four maps,
\[ \ba{rcl} \Bl_{N,\phi}\colon H_1(N;R)\times H_1(N;R)&\to & Q/R \\
(a,b)&\mapsto & \Bl_{N,\phi}(b)(a),\ea\]
as the Blanchfield pairing of $(N,\phi)$. By definition it is sesquilinear, meaning that it is linear in the first entry and conjugate-linear in the second entry.
In favourable situations the Blanchfield pairing can also be shown to be hermitian and nonsingular, but we do not investigate these properties in this article.

\section{Proof of the orthogonal decomposition theorem}\label{section:proof-of-orthog-decomp-thm}

If $Y$ is a $3$-manifold, $\phi\colon \Z[\pi_1(Y)]\to R$ is a morphism, and $X\subset Y$ is a connected submanifold, then let us also denote the restriction of $\phi$ to $\Z[\pi_1(X)]$ by $\phi$.  For the convenience of the reader, we recall the statement of Theorem~\ref{thm:decomp-of-3-mflds-Bl-intro} from the introduction.

\begin{theorem}\label{thm:decomp-of-3-mflds-Bl}
Let $Y$ be a 3-manifold with empty or toroidal boundary and let $Y=A\cup_T B$ be a decomposition of $Y$, along a torus $T$, into two  3-manifolds $A$ and $B$.
Let $R$ be an Ore domain with involution and let $\phi\colon \Z[\pi_1(Y)]\to R$ be a morphism such that $H_*(T;Q)=0 = H_*(Y;Q)$.
Then $H_*(A;Q)=0$ and $H_*(B;Q)=0$ and the inclusion maps
$i_A \colon A\to Y$ and $i_B \colon B\to Y$ induce a morphism of linking pairings
\[  i_A + i_B \colon (H_1(A;R)\oplus H_1(B;R), \Bl_{A, \phi} \oplus \Bl_{B,\phi}) \to (H_1(Y;R), \Bl_{Y,\phi}).\]
\end{theorem}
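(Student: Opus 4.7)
My plan is to reduce the theorem to a naturality-based diagram chase involving the four building blocks of the Blanchfield pairing---Poincar\'e--Lefschetz duality $\PD$, the Bockstein isomorphism $\beta$, the Kronecker evaluation map $\kappa$, and the long-exact-sequence map $i \colon H_1(N;R) \to H_1(N,\p N;R)$---with respect to excision and inclusions of pairs.

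The vanishings $H_*(A;Q) = 0 = H_*(B;Q)$ follow at once from the Mayer--Vietoris sequence for $Y = A \cup_T B$ tensored with $Q$ over $R$: flatness of $Q$ as an Ore localization preserves exactness, and the hypotheses $H_*(T;Q) = 0 = H_*(Y;Q)$ then force $H_n(A;Q) \oplus H_n(B;Q) = 0$ for every $n$; in particular, both $\Bl_{A,\phi}$ and $\Bl_{B,\phi}$ are defined. Writing $\p_A := A \cap \p Y$ and $\p_B := B \cap \p Y$, so that $\p A = T \cup \p_A$ and $\p B = T \cup \p_B$, the theorem reduces to the diagonal identities $\Bl_{Y,\phi}(i_A a, i_A a') = \Bl_{A,\phi}(a, a')$ and $\Bl_{Y,\phi}(i_B b, i_B b') = \Bl_{B,\phi}(b, b')$, together with the off-diagonal vanishings $\Bl_{Y,\phi}(i_A a, i_B b') = 0 = \Bl_{Y,\phi}(i_B b, i_A a')$.

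For the mixed term involving $\PD_Y(i_B b')$, I would use the inclusion of pairs $j \colon (Y, \p Y) \hookrightarrow (Y, A \cup \p Y)$. Excision identifies $H^*(Y, A \cup \p Y; -) \cong H^*(B, \p B; -)$, and because $H_*(B;Q) = 0$, Poincar\'e--Lefschetz duality for $B$ yields $H^*(Y, A \cup \p Y; Q) = 0$; hence the Bockstein for $(Y, A \cup \p Y)$ is an isomorphism. Naturality of cap product under $j$, combined with the identification of $j_*[Y, \p Y]$ with the image of $[B, \p B]$ under excision, shows that $\PD_Y(i_B b')$ lies in the image of $j^* \colon H^2(Y, A \cup \p Y; R) \to H^2(Y, \p Y; R)$. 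Naturality of $\beta$ and of $\kappa$ then expresses $\kappa(\beta^{-1}(\PD_Y(i_B b')))$ as a precomposition along $j_* \colon H_1(Y, \p Y; R) \to H_1(Y, A \cup \p Y; R)$. Since $i(i_A a)$ lifts to $H_1(A;R)$ and $A \subset A \cup \p Y$, its image under $j_*$ vanishes, and the pairing is zero. The opposite mixed term is symmetric, with $A$ and $B$ exchanged.

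For the diagonal identity, the same chain of naturality squares is carried out with $(Y, B \cup \p Y)$ in place of $(Y, A \cup \p Y)$: excision $H^*(Y, B \cup \p Y; -) \cong H^*(A, \p A; -)$ identifies the top composition with the defining composition of $\Bl_{A,\phi}$, and the natural map $H_1(Y, \p Y; R) \to H_1(Y, B \cup \p Y; R) \cong H_1(A, \p A; R)$ sends $i(i_A a)$ to the image of $a$, producing $\Bl_{A,\phi}(a, a')$ after evaluation. The main obstacle I anticipate is the careful verification of Poincar\'e--Lefschetz naturality, since duality is not functorial under submanifold inclusion in the naive sense; the commutative square I need must be extracted from naturality of cap product, combined with the identification of the relative fundamental classes $[Y, \p Y]$, $[B, \p B]$, and $[Y, A \cup \p Y]$ via excision. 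Once that square is in place, the $\beta$- and $\kappa$-compatibilities are formal, and the orthogonal decomposition falls out of the diagram chase sketched above.
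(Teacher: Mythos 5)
Your proposal is correct and follows essentially the same route as the paper: both arguments come down to the naturality of the Bockstein, the Kronecker evaluation, and (the crucial point) Poincar\'e--Lefschetz duality with respect to the inclusions of pairs $(Y,\p Y)\hookrightarrow (Y,A\cup\p Y)$ and $(Y,B\cup\p Y)$, together with excision and the decomposition of the fundamental class as $[Y]=i_A([A])+i_B([B])$; the paper packages this as one large commutative diagram and settles the duality-naturality square you flag as the main obstacle by an explicit chain-level computation ($f\cap[Y]=e^*(f)\cap[A]$ for a cochain $f$ vanishing on $B$, using $\Delta([Y])=\Delta([A])+\Delta([B])$), citing Bredon, Corollary~VI.8.6, for the homological version. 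Your reorganization into separate diagonal and off-diagonal chases is sound and changes nothing of substance.
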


\begin{proof}
In an attempt to keep the notation at a reasonable level we make the extra assumption that $Y$ is closed.
The proof we provide also goes through without problems in the case that $Y$ has boundary.

The Mayer-Vietoris sequence for $Y=A\cup_T B$ with $Q$-coefficients, and our hypothesis that $H_*(T;Q)=0$, implies that  $H_*(A;Q)=0$ and $H_*( B;Q)=0$. In particular the Blanchfield pairings on $A$ and $B$ are defined.
Recall that, given an $R$-module $P$, we denote $\ol{\Hom_R(P,Q/R)}$ by $P^{\wedge}$.  Consider the following diagram
\begin{equation*}
   \xymatrix @C1.2cm @R0.7cm { H_1(A;R)\ar[r]\ar[d]^{\Bl_{A}} &  H_1(Y;R)\ar[d]^{\Bl_{Y}}& \ar[d]^{\Bl_{B}} \ar[l] H_1(B;R)    \\
H_1(A;R)^\wedge& H_1(Y;R)^{\wedge} \ar[l]\ar[r] &H_1(B;R)^\wedge},
\end{equation*}
where the horizontal maps are induced by inclusion. Here and throughout the proof we omit the $\phi$ from the notation for the Blanchfield pairing.
We make the following observations.
\begin{enumerate}[(I)]
\item\label{item:proof-aspect-I} The statement that $(H_1(A;R), \Bl_{A}) \to (H_1(Y;R),\Bl_{Y})$ is a morphism of linking pairings is equivalent to the statement that the left square commutes.
\item\label{item:proof-aspect-II} The statement that $(H_1(B;R), \Bl_{B}) \to (H_1(Y;R),\Bl_{Y})$ is a morphism of linking pairings is equivalent to the statement that the right square commutes.
\item\label{item:proof-aspect-III} The statement that the images of $H_1(A;R)$ and $H_1(B;R)$ are orthogonal is equivalent to the statement that the map $H_1(A;R)\to H_1(B;R)^\wedge$, from the top left to the bottom right, and
also the map $H_1(B;R)\to H_1(A;R)^\wedge$, from the top right to the bottom left, are both the zero map.
\end{enumerate}
Now consider the following diagram.
\begin{equation}\label{equation:big-diagram}
\xymatrix @C-0.31cm @R-0.80cm{ H_1(A)\ar@{~>}@/_6.5pc/[ddddddddr]_{\Bl_A}\ar[drr]\ar[ddr] \ar@{-->}[ddd] &&&H_1(B)\ar@{~>}@/^6.5pc/[ddddddddr]^{\Bl_B}\ar[dl]\ar[ddr]
\ar@{-->}[ddd]|!{[dl];[ddr]}\hole\\
&&H_1(Y)\ar[dl]\ar[drr]\ar@{-->}[ddd]\\
&H_1(Y,B)\ar@{-->}[ddd]&&&H_1(Y,A)\hspace{0.7cm}\ar@{-->}[ddd]\\
\hspace{0.3cm} H^2(Y,B)\ar[drr]|!{[ur];[ddr]}\hole\ar@{.>}[ddd]\ar[ddr] &&&H^2(Y,A)\ar[dl]\ar[ddr] \ar@{.>}[ddd]|!{[dl];[ddr]}\hole \\
&&H^2(Y)\ar[dl]\ar[drr]\ar@{.>}[ddd]\\
&H ^2(A)\ar@{.>}[ddd]&&&H^2(B)\ar@{.>}[ddd]\\
\hspace{0.7cm} H_1(Y,B)^\wedge\ar[ddr] \ar[drr]|!{[dr];[ur]}\hole &&& H_1(Y,A)^\wedge  \ar[dl]\ar[ddr]\\
&&H_1(Y)^\wedge  \ar[drr]\ar[dl]\\
&H_1(A)^\wedge &&&H_1(B)^\wedge\\
}
\end{equation}
For space reasons we omit the $R$-coefficients.
Before we discuss the maps in the diagram and the commutativity, we give a quick guide to the diagram.
The diagram (\ref{equation:big-diagram}) consists of four parts.

\begin{enumerate}[(a)]
\item\label{item:a} The large parallelogram spanned by the groups~$H_1(A;R)$, $H_1(Y,A;R)$, $H_1(B;R)^\wedge$ and $H_1(Y,B;R)^\wedge$, comprising four smaller parallelograms.  The large parallelogram contains three rows, each of which is a portion of the long exact sequence corresponding either to the pair  $(Y,A)$ or the pair $(Y,B)$.
\item\label{item-b} The parallelogram spanned by the groups~$H_1(B;R)$, $H_1(Y,B;R)$, $H_1(A;R)^\wedge$ and $H_1(Y,A;R)^\wedge$ is defined in the same way as the previous parallelogram from (\ref{item:a}), except that we swapped the r\^oles of $A$ and $B$.
\item There are diagonal maps towards the left and the right of the diagram that connect the two large parallelograms described in (\ref{item:a}) and (\ref{item-b}), e.g.\ the map $H_1(A) \to H_1(Y,B)$ on the left and the map $H_1(B) \to H_1(Y,A)$ on the right.
\item The undulating arrows are given by the maps $\Bl_A$ and $\Bl_B$ defining the Blanchfield pairings on $A$ and $B$.
\end{enumerate}

The above discussion shows, in particular, that the composition of two collinear solid maps is zero.
In the hope of facilitating comprehension, we use four different ways of depicting maps:

\begin{enumerate}[(1)]
\item The solid maps are all inclusion induced maps. For example, the map $H_1(B;R)\to H_1(Y,A;R)$ on the top right is induced by the inclusion of the pair $(B,\emptyset)$ to $(Y,A)$.
\item The dashed arrows going down are the inverses of the maps given by capping with the fundamental class $[Y]$.
\item The dotted arrows going down are the maps given by $\kappa\circ \beta^{-1}$.
\item As mentioned above, the undulating arrows are given by the maps $\Bl_A$ and $\Bl_B$ defining the Blanchfield pairings on $H_1(A;R)$ and $H_1(B;R)$.
\end{enumerate}
Now we argue that the above diagram is commutative. More precisely, we show that each parallelogram and each triangle commutes.
\begin{enumerate}[(i)]
\item All the triangles involve  only  inclusion induced maps, hence they commute.
\item The Bockstein homomorphism and the Kronecker evaluation map are functorial. In particular they commute with inclusion induced maps. It follows that all parallelograms involving the dotted arrows commute.
\item\label{item:parallelogram-3} Consider the parallelogram spanned by $H_1(A;R)$, $H_1(Y,A;R)$, $H^2(B;R)$ and $H^2(Y,B;R)$, comprising two smaller parallelograms. This piece of the diagram commutes by  \cite[Corollary~VI.8.6]{Br93}.
\item Similarly to (\ref{item:parallelogram-3}), the parallelogram spanned by $H_1(B;R)$, $H_1(Y,B;R)$, $H^2(A;R)$ and $H^2(Y,A;R)$ also commutes.
\item\label{item:piece-of-diagram-5} Next we consider the following piece of the above diagram
\[
\xymatrix @C-0.11cm @R-0.980cm{ H_1(A;R)\ar[drr]\ar[ddr] \ar@{-->}[ddd] \\
&&H_1(Y;R)\ar[dl]\ar@{-->}[ddd]\\
&H_1(Y,B;R)\ar@{-->}[ddd]&\\
\hspace{0.3cm} H^2(Y,B;R)\ar[drr]|!{[ur];[ddr]}\hole\ar[ddr] & \\
&&H^2(Y;R)\ar[dl]\\
&H ^2(A;R)&
}
\]
We have already shown that the top and bottom triangle commute, that the parallelogram in the back commutes and that the parallelogram on the right commutes. It is then straightforward to verify that the parallelogram on the left also commutes.
\item The same argument as in (\ref{item:piece-of-diagram-5}) shows that the parallelogram given by $H_1(B;R)$, $H_1(Y,A;R)$, $H^2(Y,A;R)$ and $H^2(B;R)$ commutes.
\item It remains to show that the pieces of the diagram involving the undulating arrows commute. By symmetry it suffices to show that the piece involving $\Bl_A$ commutes. Going back to the definition of $\Bl_A$, we have to show that the following diagram commutes.
\begin{equation}\label{equation:commuting-diagram-2}
\xymatrix@C1.6cm@R1.1cm{
H_1(A)\ar@{-->}[r]_-{\cong} \ar[d]_-{=}^-{\theta} & H^2(Y, B)\ar@{-->}@/_1pc/[l]_{\cap [Y]}\ar@{.>}[r]^-{\kappa \circ \beta^{-1}} \ar[d]^-{\Upsilon} &H_1(Y, B)^\wedge\ar[r]\ar[d] & H_1(A)^\wedge \ar[d]
\\
 H_1(A)\ar@{~>}@/_2pc/[rrr]_{\Bl_A}\ar@{-->}[r]_-\cong &\ar@{-->}@/_1pc/[l]_{\cap [A]}H^2(A,\partial A)\ar@{.>}[r]^-{\kappa \circ \beta^{-1}}&H_1(A,\partial A)^\wedge\ar[r] & H_1(A)^\wedge
}\end{equation}
Here $\Upsilon$ denotes the inclusion induced map. In the diagram, we once more suppress the $R$-coefficients from the notation. Furthermore we use the same conventions for the arrows as above. Therefore we see, with the same arguments as above, that the central square and the square on the right commute.

To see that the square on the left commutes, we return to the definition of the cap product that defines the Poincar\'{e} duality isomorphisms.  Let $[A]$ and $[B]$ be fundamental classes in $H_3(A,\partial A;\Z)$ and $H_3(B,\partial B;\Z)$ respectively.  Then $[Y]:=i_A([A])+i_B([B])$ is a fundamental class for $Y$.  Denote Eilenberg-Zilber diagonal chain approximation maps, for example as arising from the Alexander-Whitney diagonal approximation map~\cite[Chapter~VI]{Br93}, by $\Delta$.  Then $$\Delta([Y]) = \Delta([A]) + \Delta([B]) \in \ol{C_*(Y;R)} \otimes_{R} C_*(Y;R).$$  Let $[f] \in H^2(Y,B)$ and we denote the dual of the excision isomorphism
by $e^* \colon H^2(Y,B) \to H^2(A,\partial A)$.
The cochain $f \colon C_2(Y,B;R) \to R$ is a function which vanishes on chains of $B$.  This explains the penultimate equality of the following (co-)chain level computation.  We have
\begin{align*}
  f \cap [Y] &= (f \otimes \Id)\Delta([Y]) \\
  &= (f \otimes \Id)\Delta([A] + [B]) \\
 &= (f \otimes \Id)(\Delta([A]) + \Delta([B])) \\
&= (f \otimes \Id)(\Delta([A])) + (f \otimes \Id)(\Delta([B])) \\
&= (f \otimes \Id)(\Delta([A])) + 0 \\
&= e^*(f) \cap [A].
\end{align*}
Thus the left square of (\ref{equation:commuting-diagram-2}) commutes with the arrows pointing to the left. Denote capping with $[Y]$, $[A]$ by $\rho_Y$, $\rho_A$ respectively.  Then we have shown that $\theta \circ \rho_Y = \rho_A \circ \Upsilon$.  However the horizontal dashed maps are isomorphisms, so it follows that the left square commutes with both directions of the arrows.
For:
\[\rho_A^{-1}\circ\theta = \rho_A^{-1}\circ\theta \circ \rho_Y \circ \rho_Y^{-1} = \rho_A^{-1}\circ\rho_A \circ \Upsilon \circ \rho_Y^{-1} = \Upsilon \circ \rho_Y^{-1},\]
as desired.
\end{enumerate}

This concludes the proof that the big diagram~(\ref{equation:big-diagram}) commutes. But it is now straightforward to deduce from the big diagram that the original statements contained in~(\ref{item:proof-aspect-I}), (\ref{item:proof-aspect-II}) and (\ref{item:proof-aspect-III}) hold.
\end{proof}

\section{The Blanchfield pairing of an infection}\label{section:Bl-pairing-infection}

For the convenience of the reader, we recall the statement of Theorem~\ref{thm:orthogonal-decomp-of-Bl-infected-intro} from the introduction with a little more detail.

\begin{theorem}\label{thm:orthogonal-decomp-of-Bl-infected}
Let $Y$ be a $3$-manifold with empty or toroidal boundary, let $\eta \subset Y$ be a simple closed curve and let $J\subset S^3$ be an oriented knot.
Let $\phi\colon \Z[\pi_1(Y)]\to R$ be an
$\eta$-regular morphism to an Ore domain with involution such that
$H_*(Y;Q)=0$.
Then the map $\psi$ defined in Corollary~\ref{cor:theIsometryMap} is an isomorphism $H_1(Y;R) \oplus H_1(E_J;R) \toiso H_1(Y_J;R)$.
There is an identification $R \otimes_{\zt} \Bl_J \toiso \Bl_{E_J}$.
Using this, $\psi$ induces an isomorphism of linking pairings
\begin{align*}
\psi \colon & \big(H_1(Y;R) \oplus (R \otimes_{\zt} H_1(E_J;\zt)), \Bl_{Y,\phi} \oplus (R \otimes_{\zt} \Bl_{J})\big) \\  \toiso & \big(H_1(Y_J;R), \Bl_{Y_J,\phi \circ f_*}\big).
\end{align*}
\end{theorem}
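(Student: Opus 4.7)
My plan is to apply the Orthogonal Decomposition Theorem~\ref{thm:decomp-of-3-mflds-Bl} to the natural splitting $Y_J = Y(\eta) \cup_T E_J$, where $T = \partial\nu\eta$. First I would verify the hypotheses. Since $\phi$ is $\eta$-regular, the element $\phi(\eta) - 1$ is nonzero in $R$ and hence a unit in $Q$; a direct cellular calculation then yields $H_*(T; Q) = 0$, and the same observation gives $H_*(S^1 \times D^2; Q) = 0$. For $H_*(E_J; Q)$, I would use that $\phi \circ f_*$ restricted to $\pi_1(E_J)$ factors through $\Z[\langle \mu_J\rangle] \cong \zt$ (sending $\mu_J \mapsto \phi(\eta)$); by $\eta$-regularity every nonzero element of $\zt$ becomes invertible in $Q$, and since the Alexander module of $J$ is $\zt$-torsion, we get $H_*(E_J; Q) = 0$. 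Applying Mayer--Vietoris to $Y = Y(\eta) \cup_T (S^1\times D^2)$ then produces $H_*(Y(\eta); Q) = 0$, and a second Mayer--Vietoris gives $H_*(Y_J; Q) = 0$.

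With the hypotheses in place, Theorem~\ref{thm:decomp-of-3-mflds-Bl} supplies a morphism of linking pairings
\[
(H_1(Y(\eta); R) \oplus H_1(E_J; R),\, \Bl_{Y(\eta)} \oplus \Bl_{E_J}) \to (H_1(Y_J; R),\, \Bl_{Y_J,\,\phi\circ f_*}).
\]
To align with the statement of the theorem I would invoke Lemma~\ref{lemma:tensoring-up} to identify $\Bl_{E_J}$ with $R \otimes_{\zt}\Bl_J$ (using that $H_1(E_J; R) \cong R \otimes_{\zt} H_1(E_J; \zt)$ by $\eta$-regularity), and combine with the map $\psi$ from Corollary~\ref{cor:theIsometryMap}, which reindexes the domain in terms of $H_1(Y; R)$ rather than $H_1(Y(\eta); R)$. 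Since $\psi$ is assembled from inclusion-induced maps and the degree-one map $f$, and since Theorem~\ref{thm:decomp-of-3-mflds-Bl} and Lemma~\ref{lemma:tensoring-up} are each pairing-preserving, the fact that $\psi$ is an isometry will follow automatically once one establishes that it is a bijection.

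The real content is therefore the bijectivity of $\psi$. I would run the Mayer--Vietoris sequences with $R$-coefficients for both decompositions $Y_J = Y(\eta) \cup_T E_J$ and $Y = Y(\eta) \cup_T (S^1 \times D^2)$ side by side, exploiting the fact that $H_0(E_J; R) \cong H_0(S^1 \times D^2; R) \cong R/(\phi(\eta)-1)R$ and $H_{\geq 1}(S^1 \times D^2; R) = 0$. Comparing the two sequences via $f$, followed by a five-lemma argument on the relevant portion, should identify the difference between $H_1(Y_J; R)$ and $H_1(Y; R)$ precisely with $H_1(E_J; R)$, which is exactly what $\psi$ encodes.

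The principal obstacle is this last comparison. Unlike $H_*(T; Q)$, which vanishes by the $\eta$-regularity argument above, the module $H_*(T; R)$ is generally nonzero, so the Mayer--Vietoris boundary maps do not disappear automatically and must be analysed by hand. Controlling them relies crucially on $\eta$-regularity to tame the interaction between $H_*(T; R)$ and the Alexander module $H_1(E_J; R)$, ensuring that the apparent obstructions to $\psi$ being an isomorphism cancel on the nose.
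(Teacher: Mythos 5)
Your overall strategy coincides with the paper's: split $Y_J = Y(\eta)\cup_T E_J$, apply Theorem~\ref{thm:decomp-of-3-mflds-Bl}, identify $\Bl_{E_J}$ with $R\otimes_{\zt}\Bl_J$ via Lemma~\ref{lemma:tensoring-up}, and compare Mayer--Vietoris sequences to produce the module isomorphism $\psi$. However, you declare two key steps to be automatic when they are in fact the substance of the proof. The first concerns the isometry property: it does \emph{not} follow from bijectivity of $\psi$. Theorem~\ref{thm:decomp-of-3-mflds-Bl} makes $i_{Y_J}+i_J$ a morphism of pairings on $H_1(Y(\eta);R)\oplus H_1(E_J;R)$, but $\psi$ is defined on $H_1(Y;R)\oplus H_1(E_J;R)$, and the ``reindexing'' replaces $\Bl_{Y(\eta)}$ by $\Bl_Y$ in the first summand. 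You need the additional geometric input that $i_Y^*\Bl_Y=\Bl_{Y(\eta)}$, which the paper obtains by viewing $Y$ itself as an infection by the unknot, $Y=Y_U=Y(\eta)\cup_T E_U$, and applying Theorem~\ref{thm:decomp-of-3-mflds-Bl} a \emph{second} time, using $H_1(E_U;R)=0$. Only then, setting $g=i_Y\oplus\Id$, does one get $g^{\wedge}\circ\psi^*(\Bl_{Y_J})\circ g=g^{\wedge}\circ(\Bl_Y\oplus\Bl_{E_J})\circ g$, and one concludes $\psi^*(\Bl_{Y_J})=\Bl_Y\oplus\Bl_{E_J}$ because $g$ is surjective, hence $g^{\wedge}$ is injective. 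None of this is a formal consequence of $\psi$ being a bijection assembled from pairing-preserving pieces.

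The second gap is in the bijectivity argument itself. The reason $H_1(Y_J;R)$ splits off an intact $H_1(E_J;R)$ summand is that the Mayer--Vietoris map $H_1(T;R)\to H_1(E_J;R)$ is zero. This is \emph{not} a consequence of $\eta$-regularity ``taming'' the boundary maps: $\eta$-regularity kills $H_*(T;Q)$ but says nothing about $H_*(T;R)$, which is generally nonzero, as you observe. The actual reason is specific to knot exteriors: the map is obtained by tensoring up $H_1(\partial E_J;\zt)\to H_1(E_J;\zt)$, the source is generated by the zero-framed longitude of $J$, and that longitude lies in $\pi_1(E_J)^{(2)}$ (it is a product of commutators of curves on a Seifert surface, each of which is itself a commutator), so it vanishes in the Alexander module $H_1(E_J;\zt)\cong\pi_1(E_J)^{(1)}/\pi_1(E_J)^{(2)}$. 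Without this, the image of $H_1(T;R)$ could hit the $H_1(E_J;R)$ factor and the claimed direct sum decomposition would fail. Note also that a naive five-lemma comparison of the two sequences via $f$ cannot work as stated, since the comparison map on the knot-exterior summand is $f_*\colon H_1(E_J;R)\to H_1(E_U;R)=0$, which is nowhere near an isomorphism; one really must use the specific vanishing above to read off $H_1(Y_J;R)\cong H_1(Y(\eta);R)/H_1(T;R)\oplus H_1(E_J;R)$ and $H_1(Y;R)\cong H_1(Y(\eta);R)/H_1(T;R)$ separately.
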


\subsection{The homology of the infected 3-manifold}\label{section:homology-infection}
Let $Y$ be a $3$-manifold with empty or toroidal boundary, let $\eta \subset Y$ be a simple closed curve and let $J\subset S^3$ be an oriented knot.
Furthermore, let $\phi\colon \Z[\pi_1(Y)]\to R$ be an
$\eta$-regular morphism to an Ore domain with involution.

The restriction to $\pi_1(E_J) \to R$ factors through the abelianisation map $\pi_1(E_J)\to H_1(E_J;\Z)\toiso \Z$, which implies $H_1(E_J;R) \cong R \otimes_{\Z[\Z]} H_1(E_J;\Z[\Z])$.

\begin{lemma}\label{lemma:homology-of-Y}~
We write $T=\partial \overline{\nu \eta}=\partial E_U=\partial E_J$.
 The inclusion maps $Y(\eta)\to Y_J$, $E_J\to Y_J$ and $Y(\eta)\to Y$, and the degree one map $f\colon Y_J\to Y$, induce a commutative diagram
  \[\xymatrix@R0.7cm{H_1(Y(\eta);R)/H_1(T;R)\oplus H_1(E_J;R) \ar[d]^-{(\Id\oplus 0)} \ar[r]^-\cong & H_1(Y_J;R)  \ar[d]^-{f_*}  \\
 H_1( Y(\eta);R)/H_1(T;R) \ar[r]^-\cong & H_1(Y;R),}\]
for which the horizontal maps are isomorphisms.
\end{lemma}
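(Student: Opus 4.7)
My plan is to run the Mayer--Vietoris sequence with $R$-coefficients for both decompositions $Y_J = Y(\eta)\cup_T E_J$ and $Y = Y(\eta)\cup_T E_U$, and then to compare the two via $f$. The key input will be a computation of $H_*(T;R)$, $H_*(E_J;R)$ and $H_*(E_U;R)$ using $\eta$-regularity together with the observation that $\phi(\mu_\eta)=1$, since $\mu_\eta$ bounds a meridional disk in $E_U\subset Y$.

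First I would pin down $H_*(T;R)$ by running the cellular chain complex of $T=S^1\times S^1$ with its twisted coefficients, obtaining
\[
H_0(T;R)\;\cong\; R/R(\phi(\eta)-1)\;\cong\; H_1(T;R),
\]
where the $H_1$-generator is represented by the cycle $\mu_\eta$; here I would use $\phi(\eta)-1\neq 0$ in $R$ (by $\eta$-regularity) together with the fact that $R$ is an Ore domain to kill $H_2$. Since the coefficient system on $E_J$ factors as $\pi_1(E_J)\to\zt\xrightarrow{t\mapsto\phi(\eta)}R$, the standard Alexander module computation gives $H_1(E_J;R)\cong R\otimes_\zt H_1(E_J;\zt)$ and $H_0(E_J;R)\cong R/R(\phi(\eta)-1)$, and the inclusion $T\hookrightarrow E_J$ is an isomorphism on $H_0$. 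For $E_U=S^1\times D^2$ the analogous computation yields $H_0(E_U;R)\cong R/R(\phi(\eta)-1)$ and $H_1(E_U;R)=0$.

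The crucial step is to show that the inclusion-induced map $H_1(T;R)\to H_1(E_J;R)$ vanishes: under the gluing, $\mu_\eta$ is identified with the longitude $\ell_J$ of $J$, which is null-homologous in $E_J$ and bounds a Seifert surface that lifts to the infinite cyclic cover, so $[\ell_J]=0$ in $H_1(E_J;\zt)$ and hence in $H_1(E_J;R)$. Feeding all of this into the Mayer--Vietoris sequence
\[
H_1(T;R)\xrightarrow{(\iota_{Y(\eta)},0)} H_1(Y(\eta);R)\oplus H_1(E_J;R)\to H_1(Y_J;R)\to H_0(T;R)\hookrightarrow H_0(Y(\eta);R)\oplus H_0(E_J;R)
\]
and taking cokernels yields the top horizontal isomorphism; the analogous sequence for $Y = Y(\eta)\cup_T E_U$, combined with $H_1(E_U;R)=0$, gives the bottom one. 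Commutativity of the square with $f_*$ is then automatic: $f$ is the identity on $Y(\eta)$ and collapses $E_J$ onto $E_U$, and $H_1(E_U;R)=0$ annihilates the $H_1(E_J;R)$-summand, matching $\mathrm{Id}\oplus 0$ on the left. The main obstacle I anticipate is the non-commutative bookkeeping needed to set up the cellular chain complex of $T$ over $R$ and to identify the $H_1(T;R)$-generator with $\mu_\eta=\ell_J$; once that is settled, the rest is a routine Mayer--Vietoris cokernel computation.
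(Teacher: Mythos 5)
Your proposal is correct and follows essentially the same route as the paper: Mayer--Vietoris for $Y_J = Y(\eta)\cup_T E_J$ and $Y = Y(\eta)\cup_T E_U$, the vanishing of $H_1(T;R)\to H_1(E_J;R)$ because the zero-framed longitude of $J$ dies in $H_1(E_J;\zt)$ (you argue via a lifted Seifert surface, the paper via the longitude lying in $\pi_1(E_J)^{(2)}$ --- the same fact), injectivity of the connecting map into $H_0(T;R)$, and naturality under $f$. The only cosmetic difference is that you make the computation of $H_*(T;R)$ and $\phi(\mu_\eta)=1$ explicit where the paper leaves it implicit.
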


\begin{proof}
Below, we consider the Mayer-Vietoris sequences for
\[ Y=Y(\eta) \cup_T \overline{\nu \eta} = Y(\eta) \cup_T E_U \mbox{ and }Y_J = Y(\eta) \cup_T E_J.\]
 Note that $H_1(\nu \eta;R)=H_1(E_U;R)=0$ since $\phi(\eta) \neq 1$.  We also observe that the map $H_1(T;R)=H_1(\partial E_J;R) \to H_1(E_J;R)$ is the zero map, since this map is given by tensoring up $H_1(\partial E_J;\zt) \to H_1(E_J;\zt)$, but the latter map is the zero map.
 To see this, first note that $H_1(\partial E_J;\zt) \cong H_1(S^1 \times \mathbb{R};\Z) \cong \Z$, generated by the zero-framed longitude of $J$.  Then $H_1(E_J;\zt) \cong \pi_1(E_J)^{(1)}/\pi_1(E_J)^{(2)}$, and the longitude of any knot is a commutator of curves on a Seifert surface. As curves on the Seifert surface are commutators, the longitude is a double commutator in $\pi_1(E_J)^{(2)}$ and therefore vanishes in $H_1(E_J;\zt)$.
  The map $H_0(\partial Y(\eta);R) \to H_0(E_L;R)$ is an isomorphism for $L=J$ and $L=U$.  Note that $Y_U=Y$.

The computations above show that the Mayer-Vietoris sequences give rise to the
 following commutative diagram with exact rows:
\[\xymatrix@R0.7cm{H_1(T;R) \ar[r]^-{(\inc,0)} \ar[d]^{\Id} & H_1(Y(\eta);R) \oplus H_1(E_J;R) \ar[d]^{(\Id\oplus f_*)} \ar[r] & H_1(Y_J;R) \ar[r] \ar[d]^-{f_*} & 0 \\
H_1(T;R) \ar[r]^-{(\inc,0)}  & H_1(Y(\eta)) \oplus (H_1(E_U;R)=0) \ar[r] & H_1(Y;R) \ar[r] & 0.}\]
The lemma follows easily from this diagram and the above observations.
\end{proof}

The first part of Theorem \ref{thm:orthogonal-decomp-of-Bl-infected} states that there an isomorphism of $R$-modules
$\psi \colon H_1(Y;R) \oplus H_1(E_J;R) \toiso H_1(Y_J;R)$. The dashed map in the corollary below gives such an isomorphism. The corollary is immediate from Lemma~\ref{lemma:homology-of-Y}.  In Section~\ref{section:morphism-of-pairings}, we will prove that $\psi$ induces a morphism of linking pairings.

\begin{corollary}\label{cor:theIsometryMap}
Let $i_{Y_J} \colon Y(\eta) \rightarrow Y_J$, $i_Y \colon Y(\eta) \rightarrow Y$
and $i_J \colon E_J \rightarrow Y_J$ the inclusion maps. There is a unique morphism~$\psi$, depicted by a dashed arrow, making the following diagram commutative.
 \[\xymatrix@C-1.2cm@R1.0cm{
       & \ar[dl]_-{i_Y \oplus \Id}H_1(Y(\eta);R) \oplus H_1(E_J;R) \ar[dr]^-{i_{Y_J} + i_J }&  \\
           H_1(Y;R) \oplus H_1(E_J;R)  \ar@{-->}[rr]^-{\psi} &    &  H_1(Y_J;R).
}\]
Moreover the map $\psi$ is an isomorphism.
\end{corollary}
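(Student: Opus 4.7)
The plan is to extract the existence, uniqueness, and isomorphism property of $\psi$ directly from Lemma~\ref{lemma:homology-of-Y}, which already carries out all the relevant Mayer--Vietoris bookkeeping. First I would read off from the bottom row of the commutative square in the lemma that the inclusion-induced map $i_Y \colon H_1(Y(\eta);R) \to H_1(Y;R)$ is surjective with kernel equal to the image of $H_1(T;R)$. Consequently the map
\[ i_Y \oplus \Id \colon H_1(Y(\eta);R) \oplus H_1(E_J;R) \to H_1(Y;R) \oplus H_1(E_J;R) \]
is surjective with kernel exactly the image of $H_1(T;R) \oplus 0$ under $(\inc_*,0)$.

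Next I would verify that this kernel is annihilated by $i_{Y_J} + i_J$. The Mayer--Vietoris sequence for the decomposition $Y_J = Y(\eta) \cup_T E_J$ gives exactness of $H_1(T;R) \to H_1(Y(\eta);R) \oplus H_1(E_J;R) \to H_1(Y_J;R)$, and the proof of Lemma~\ref{lemma:homology-of-Y} shows that the component $H_1(T;R) \to H_1(E_J;R)$ is zero. Hence $(i_{Y_J} + i_J)(\inc_*(t), 0) = 0$ for every $t \in H_1(T;R)$. The universal property of the cokernel then produces a unique homomorphism $\psi \colon H_1(Y;R) \oplus H_1(E_J;R) \to H_1(Y_J;R)$ making the triangle of the corollary commute, with uniqueness forced by the surjectivity of $i_Y \oplus \Id$ established in the previous step.

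To see that $\psi$ is an isomorphism, I would read the top row of the lemma as saying that $i_{Y_J} + i_J$ descends to an isomorphism $(H_1(Y(\eta);R)/H_1(T;R)) \oplus H_1(E_J;R) \toiso H_1(Y_J;R)$, while the bottom row provides an inverse isomorphism between $H_1(Y;R)$ and $H_1(Y(\eta);R)/H_1(T;R)$. Composing these exhibits $\psi$ as a composition of two isomorphisms. I do not anticipate any real obstacle here, since Lemma~\ref{lemma:homology-of-Y} has already done all of the substantive homological work; the only subtlety worth double-checking is that the factorisation produced by the universal property genuinely coincides with the composition coming from the lemma's two horizontal isomorphisms, which is immediate from the commutativity of the square in the lemma.
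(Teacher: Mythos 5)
Your proposal is correct and follows essentially the same route as the paper, which simply declares the corollary immediate from Lemma~\ref{lemma:homology-of-Y}; you have just made explicit the cokernel/universal-property bookkeeping that the authors leave to the reader. No gaps.
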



\subsection{Morphism of Blanchfield pairings}\label{section:morphism-of-pairings}


To prove Theorem \ref{thm:orthogonal-decomp-of-Bl-infected},
we have to show that the isomorphism $\psi$ is a morphism of linking pairings.

\begin{proposition}\label{prop:iso-lk-pairings}
The isomorphism $\psi \colon H_1(Y;R) \oplus H_1(E_J;R) \toiso H_1(Y_J;R)$ is an isomorphism of linking pairings.
\end{proposition}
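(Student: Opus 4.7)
The strategy is to deduce the proposition from the orthogonal decomposition Theorem~\ref{thm:decomp-of-3-mflds-Bl}, applied to two different decompositions: first to the trivial infection $Y=Y(\eta)\cup_T E_U$, where $E_U=\overline{\nu\eta}$ is a solid torus, and second to $Y_J=Y(\eta)\cup_T E_J$. Combining the two resulting morphism-of-pairings statements with the commutative triangle that defines $\psi$ in Corollary~\ref{cor:theIsometryMap} will yield the required identity of pairings.

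First I would verify the hypotheses of Theorem~\ref{thm:decomp-of-3-mflds-Bl} for both decompositions. The vanishing $H_*(T;Q)=0$ is immediate from $\eta$-regularity: one of the generators of $\pi_1(T)\cong\Z^2$ is a longitude isotopic to $\eta$, so the cellular differentials for the universal cover of $T$ involve $\phi(\eta)-1$, which is a unit in $Q$ since $\phi|_{\langle\eta\rangle}\colon\zt\to R$ is injective. The hypothesis $H_*(Y;Q)=0$ is given. To verify $H_*(Y_J;Q)=0$, I would first note that $\phi\circ f_*$ restricted to $\pi_1(E_J)$ factors through $\langle\eta\rangle=\Z$ (since $f$ collapses $E_J$ to $\nu\eta$), hence $H_*(E_J;R)\cong R\otimes_\zt H_*(E_J;\zt)$; the classical Alexander module is $\zt$-torsion and $\zt$ embeds in the field $Q$, so $H_*(E_J;Q)=0$, and identically $H_*(E_U;Q)=0$. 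A first application of Theorem~\ref{thm:decomp-of-3-mflds-Bl} to $Y$ then gives $H_*(Y(\eta);Q)=0$, and the Mayer-Vietoris sequence for $Y_J=Y(\eta)\cup_T E_J$ yields $H_*(Y_J;Q)=0$.

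With the hypotheses established, Theorem~\ref{thm:decomp-of-3-mflds-Bl} applied to $Y=Y(\eta)\cup_T E_U$ shows that $i_Y$ induces a morphism of linking pairings $(H_1(Y(\eta);R),\Bl_{Y(\eta)})\to(H_1(Y;R),\Bl_Y)$, using that $H_1(E_U;R)=0$ because $\phi(\eta)-1$ is a nonzero element of the domain $R$. Applied to $Y_J=Y(\eta)\cup_T E_J$, the same theorem shows that
\[i_{Y_J}+i_J\colon(H_1(Y(\eta);R)\oplus H_1(E_J;R),\,\Bl_{Y(\eta)}\oplus\Bl_{E_J})\to(H_1(Y_J;R),\Bl_{Y_J})\]
is a morphism of linking pairings.

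To conclude, given $(a,c),(b,d)\in H_1(Y;R)\oplus H_1(E_J;R)$, the surjectivity of $i_Y\oplus\Id$ from Lemma~\ref{lemma:homology-of-Y} provides lifts $\widetilde a,\widetilde b\in H_1(Y(\eta);R)$ with $i_Y(\widetilde a)=a$ and $i_Y(\widetilde b)=b$. The defining triangle of Corollary~\ref{cor:theIsometryMap} then reads $\psi(a,c)=i_{Y_J}(\widetilde a)+i_J(c)$, and similarly for $(b,d)$; combining the two preceding morphism statements yields
\[\Bl_{Y_J}(\psi(a,c),\psi(b,d))=\Bl_{Y(\eta)}(\widetilde a,\widetilde b)+\Bl_{E_J}(c,d)=\Bl_Y(a,b)+\Bl_{E_J}(c,d),\]
as required. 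The main technical point is the verification of the $Q$-acyclicity hypotheses, in particular $H_*(Y_J;Q)=0$, which is needed to invoke the orthogonal decomposition theorem on the infection decomposition; once that is in hand the rest is a formal diagram chase powered by the surjectivity in Lemma~\ref{lemma:homology-of-Y}.
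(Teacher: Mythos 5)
Your proposal is correct and follows essentially the same route as the paper: two applications of Theorem~\ref{thm:decomp-of-3-mflds-Bl}, once to $Y=Y(\eta)\cup_T E_U$ and once to $Y_J=Y(\eta)\cup_T E_J$, combined with the defining triangle of Corollary~\ref{cor:theIsometryMap} and the surjectivity of $i_Y\oplus\Id$ (the paper phrases this last step as injectivity of $(i_Y\oplus\Id)^{\wedge}$ rather than by choosing lifts, but it is the same argument). Your explicit verification of the $Q$-acyclicity hypotheses for $T$, $E_J$ and $Y_J$ is a welcome addition that the paper leaves implicit.
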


\begin{proof}
We begin with a claim.
\begin{claim*}
  The following two pairings on $H_1(Y(\eta);R) \oplus H_1(E_J;R)$ agree:
\[ (i_Y \oplus \Id)^*(\Bl_{Y} \oplus \Bl_{E_J}) = (i_{Y_J} + i_J)^* \Bl_{Y_J}.\]
\end{claim*}

From the glueing formula of Theorem \ref{thm:decomp-of-3-mflds-Bl}, we
deduce that
\[(i_{Y_J} + i_J)^* \Bl_{Y_J} = \Bl_{Y(\eta)} \oplus \Bl_{E_J}\]
on $H_1(Y(\eta);R) \oplus H_1(E_J;R)$.
Express $Y$ as an infection by the unknot: $Y = Y_{U} = Y(\eta) \cup E_U$. Apply Theorem~\ref{thm:decomp-of-3-mflds-Bl} once again, to deduce that
$i_{Y}^* \Bl_{Y} = \Bl_{Y(\eta)}$, since $H_1(E_U;R)=0$.
Thus we obtain that
\[ (i_Y \oplus \Id)^*(\Bl_{Y} \oplus \Bl_{E_J}) = \Bl_{Y(\eta)} \oplus \Bl_{E_J}.\]
This completes the proof of the claim.
Next we show that $$(i_Y \oplus \Id)^*(\psi^*(\Bl_{Y_J})) = (i_Y \oplus \Id)^*(\Bl_Y \oplus \Bl_{E_J})$$
on $H_1(Y(\eta);R) \oplus H_1(E_J;R)$.
To see this we compute \begin{align*}
  (i_Y \oplus \Id)^*(\psi^*(\Bl_{Y_J})) &= (\psi \circ (i_Y \oplus \Id))^*(\Bl_{Y_J}) \\ &= (i_{Y_J} + i_J)^*(\Bl_{Y_J}) \\ &= (i_Y \oplus \Id)^*(\Bl_{Y} \oplus \Bl_{E_J}),\end{align*}
where the last equality is from the claim above.

Let $g:= i_Y \oplus \Id$.  Now we have two maps $\alpha := \psi^*(\Bl_{Y_J})$ and $\gamma := \Bl_Y \oplus \Bl_{E_J}$ on $H_1(Y;R) \oplus H_1(E_J;R)$ such that
\[g^{\wedge}\circ \alpha \circ g = g^{\wedge} \circ \gamma \circ g.\]
Note that $g= i_Y \oplus \Id$ is surjective, which implies that $g^{\wedge}$ is injective.  It follows that $\alpha=\gamma$, which completes the proof of Proposition~\ref{prop:iso-lk-pairings}.
\end{proof}

The proof of Theorem~\ref{thm:orthogonal-decomp-of-Bl-infected} is completed by our final  lemma.

\begin{lemma}\label{lemma:tensoring-up}
Let $\phi \colon \zt \to R$ be a monomorphism of Ore domains with involution and let $J$ be an oriented knot.
We have an isomorphism of linking pairings
\[\theta \colon (R \otimes_{\zt} H_1(E_J;\zt),R \otimes \Bl_{J}) \toiso (H_1(E_J;R),\Bl_{E_J}),\]
where $\theta$ is defined by 
$r\otimes [p\otimes \sigma]\to [r\phi(p)\otimes \sigma]$ for $\sigma\in C_1(\widetilde{E_J})$, $p\in \zt$ and $r\in R$.
\end{lemma}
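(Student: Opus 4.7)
The strategy is to interpret $\theta$ as the natural map induced on homology by a chain-level identification, then argue separately that $\theta$ is an $R$-module isomorphism and that it respects the pairings via naturality of each building block of the Blanchfield pairing. Since the coefficient map $\Z[\pi_1(E_J)] \to R$ factors through the abelianisation $\Z[\pi_1(E_J)] \to \zt$ followed by $\phi$, we have a canonical chain-level identification $C_*(E_J;R) = R \otimes_{\zt} C_*(E_J;\zt)$, and similarly for the relative complex $C_*(E_J, \partial E_J; R) = R \otimes_{\zt} C_*(E_J, \partial E_J; \zt)$. Under this identification the formula in the statement exhibits $\theta$ as the natural map $R \otimes_{\zt} H_*(C_*(E_J;\zt)) \to H_*(R \otimes_{\zt} C_*(E_J;\zt))$.

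To show that $\theta$ is an $R$-module isomorphism, I would invoke the universal coefficient spectral sequence $\Tor_p^{\zt}(R, H_q(E_J;\zt)) \Rightarrow H_{p+q}(E_J;R)$. The first key computation is $\Tor_i^{\zt}(R, H_0(E_J;\zt)) = 0$ for $i \geq 1$: since $H_0(E_J;\zt) = \zt/(t-1)$ has the length-one free resolution $0 \to \zt \xrightarrow{t-1} \zt \to \zt/(t-1) \to 0$, and $(t-1)$ acts injectively on the domain $R$ because $\phi$ is injective. The second is $\Tor_i^{\zt}(R, H_1(E_J;\zt)) = 0$ for $i \geq 1$: the Alexander module admits a square presentation $\zt^n \xrightarrow{M} \zt^n \to H_1(E_J;\zt) \to 0$ with $\det M = \pm \Delta_J(t)$, and since $\phi$ is injective $\phi(\Delta_J(t))$ is a non-zero element of $R$, hence invertible in $Q$; so $R \otimes_{\zt} M$ is injective. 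The spectral sequence therefore collapses and $\theta$ is identified with an isomorphism.

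Finally, to verify that $\theta$ is an isomorphism of linking pairings I would appeal to the naturality of each of the four constituent maps in the definition of the Blanchfield pairing from Section~\ref{section:defn-of-TBF} with respect to the ring homomorphism $\phi$. Cap product with the fundamental class $[E_J]$ is defined at the integral chain level using a diagonal approximation and is compatible with $R \otimes_{\zt} -$; the short exact sequence $0 \to \zt \to \Q(t) \to \Q(t)/\zt \to 0$ maps under $\phi$ to $0 \to R \to Q \to Q/R \to 0$, yielding naturality of the Bockstein; the Kronecker evaluation and the inclusion-induced map $H_1(E_J;M) \to H_1(E_J, \partial E_J; M)$ are visibly natural. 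Chaining the four commuting naturality squares gives $\Bl_{E_J} \circ \theta = \theta^{\wedge} \circ (R \otimes_{\zt} \Bl_J)$, which is the asserted isomorphism of linking pairings. The main subtlety I anticipate is the Tor-vanishing using the Alexander matrix; once that is in hand, the pairing compatibility is a formal consequence of the naturality squares.
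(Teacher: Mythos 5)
Your proposal is correct and follows essentially the same route as the paper: the compatibility of the pairings is established, exactly as in the paper's proof, by chain-level naturality of Poincar\'e duality, the Bockstein, the Kronecker evaluation and the inclusion-induced map with respect to the coefficient change along $\phi$, using the identification $C_*(E_J;R)=R\otimes_{\zt}C_*(E_J;\zt)$. The only divergence is that you supply a genuine argument (the universal coefficient spectral sequence together with $\Tor_i^{\zt}(R,H_0(E_J;\zt))=\Tor_i^{\zt}(R,\zt/(t-1))=0$ for $i\geq 1$, which is all that is really needed in degree one) for the module isomorphism $R\otimes_{\zt}H_1(E_J;\zt)\toiso H_1(E_J;R)$, which the paper merely asserts as a consequence of the factorisation through the abelianisation; this is a welcome addition rather than a deviation.
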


This lemma was stated as Theorem~4.7 in \cite{Le06}, but at that time no proof was provided, as the result was not used in the rest of that paper. However, \cite[Theorem~4.7]{Le06} has since been cited by many subsequent papers, so we provide an argument here.

\begin{proof}
Consider the following diagram.
\[\xymatrix @C-0.8cm@R0.7cm{ H_1(E_J;R) \ar[dd]_-{\Bl_{E_J}} & R \otimes_{\Z[t^{\pm 1}]} H_1(E_J;\Z[t^{\pm 1}]) \ar[l]_-{\theta} \ar[d]^-{\Id \otimes \Bl_{J}}  \\
 & R \otimes_{\Z[t^{\pm 1}]} \ol{\Hom_{\Z[t^{\pm 1}]}(H_1(E_J;\Z[t^{\pm 1}]),\Q(t)/\Z[t^{\pm 1}])}   \ar[d]^-{\xi} \\
\ol{\Hom_R(H_1(E_J;R),Q/R)}\ar[r]^-{\theta^{\wedge}}   &  \ol{\Hom_R(R \otimes_{\Z[t^{\pm 1}]} H_1(E_J;\Z[t^{\pm 1}]),Q/R)} }  \]
The map $\theta$ an isomorphism of the underlying modules of the Blanchfield pairings.  The left vertical arrow is the Blanchfield pairing $\Bl_{E_J}$ of $E_J$ over $R$.  The composition of right vertical arrows  expresses the Blanchfield pairing $R \otimes \Bl_{J}$ defined in the introduction.   Recall that $\phi \colon \Z[t^{\pm 1}] \to R$ induces morphisms $\phi \colon \Q(t) \to Q$ and $\phi\colon \Q(t)/\Z[t^{\pm 1}] \to Q/R$. The map $\xi$ is defined as follows:
\[\xi \colon  r \otimes f    \mapsto  \Big( s\otimes d \mapsto  r \phi(f(d)) \ol{s} \Big).\]
The lemma follows from commutativity of the diagram.  To see that the diagram is commutative, note that every element in $H_1(E_J;R)$ can be written as (a sum of elements of the form) $[r\otimes d]$, where $r \in R$ and $d \in C_1(E_J;\zt)$, and that all the maps in the definition of the Blanchfield pairing are defined at the chain level.  The chain level maps are the same on the $C_*(E_J;\zt)$ (or $C^*(E_J;\zt)$ part, for the left and right vertical maps, and can always be taken to be the identity on the $R$ part.   For example, focussing on Poincar\'{e} duality, $PD(r \otimes d)$ is $$r \otimes PD(d) \in R \otimes C^2(E_J;\zt) \cong C^2(E_J;R).$$
Commutativity follows from a continuation of such definition chasing through the Bockstein and Kronecker maps.
\end{proof}

\end{document}